\newtheorem{thm}{Theorem}[section]
\newtheorem{lemma}[thm]{Lemma}
\newtheorem{prop}[thm]{Proposition}
\theoremstyle{definition}
\newtheorem{defn}[thm]{Definition}
\newtheorem{rmk}[thm]{Remark}
\newcommand{\N}{{\mathbb N}}
\newcommand{\F}{{\mathbb F}}
\newcommand{\cM}{{\mathcal M}}
\newcommand{\Ext}{\hbox{{\rm Ext}}}
\newcommand{\Tor}{\hbox{{\rm Tor}}}
\newcommand{\mydot}{\hbox{$\,>\kern -11pt \cdot\kern 5pt$}}
\newcommand{\notmydot}{\hbox{$\,\not>\kern -11pt \cdot\kern 5pt$}}
\begin{document}


\title[Splitting Algebras II]{Splitting Algebras II: The Cohomology Algebra}

\subjclass[2010]{Primary: 16W50, Secondary: 05E15} 
\keywords{cohomology algebra, Koszul algebra, splitting algebra, Cohen-Macaulay poset, order complex}

\author[  Shelton ]{Brad Shelton}
\address{University of Oregon\\
Eugene, Oregon 97403}
\email{shelton@uoregon.edu}

\begin{abstract}
\baselineskip12pt
Gelfand, Retakh, Serconek and Wilson, in \cite{GRSW}, defined a graded algebra $A_\Gamma$ attached to any finite ranked poset 
$\Gamma$ - a generalization of the universal algebra of pseudo-roots of noncommutative polynomials.  This algebra has since come to 
be known as the {\it splitting algebra} of $\Gamma$.  The splitting algebra has a secondary filtration related to the rank function on the 
poset and the associated graded algebra is denoted here by $A'_\Gamma$.  We calculate the cohomology algebra (and coalgebra) of $
A'_\Gamma$ explicitly.  As a corollary to this calculation we have a proof that $A'_\Gamma$ is Koszul (respectively quadratic) if and only if 
$\Gamma$ is Cohen-Macaulay (respectively uniform).  We show by example that the cohomology algebra 
(resp. coalgebra) of $A_\Gamma$ may 
be strictly smaller that the cohomology algebra (resp. coalgebra) of $A'_\Gamma$.
\end{abstract}
\date{August 3, 2012}

\maketitle


\baselineskip18pt

\section{Introduction}

We fix a field $\F$.  All topological cohomology groups are calculated with coefficients in $\F$.  

Let $\Gamma$ be a finite ranked poset with unique minimal element $*$, strict order $<$ and rank function 
$rk(\cdot)$.  Write $x\to y$ if $x$ covers $y$ in the usual sense.  The {\it splitting algebra} of $\Gamma$, 
$A_\Gamma$, was introduced by Gelfand, Retakh, Serconek and Wilson in \cite{GRSW} and generalizes the universal algebra of pseudo-roots, 
$Q_n$, introduced in \cite{GRW}.  An explicit definition of $A_\Gamma$ is reproduced here in \ref{original}.   

The algebra $A_\Gamma$ has a natural filtration $F^pA_\Gamma$ induced by 
the rank function on $\Gamma$,  with associated graded  algebra denoted $A'_\Gamma$.  The orginal grading from 
$A_\Gamma$ is preserved and the filtration also induces  a filtration on $\Ext_{A_\Gamma}(\F,\F)$ as an algebra and 
a filtration on $\Tor^{A_\Gamma}(\F,\F)$ as a coalgebra. 

For any $b\in \Gamma$ and $1\le n\le rk(b)$ we let
$$\Gamma_{b,n} = \{a\in (*,b) \, |\, rk(b)-rk(a) \le n-1\}.$$
 For any finite poset $P$ we denote the {\it order complex} of $P$ by $\Delta(P)$. (The order complex is the simplicial complex of all strictly decreasing sequences in $P$.)    Our first main theorem is:

\begin{thm}\label{intro1}  For any finite ranked poset $\Gamma$ and $p\ge 1$,
$$\Ext_{A'_\Gamma}^{p.q}(\F,\F) \cong 
\bigoplus\limits_{*\ne b\in \Gamma \atop rk(b) \ge q} \tilde H^{p-2}(\Delta(\Gamma_{b,q})).$$
and
$$\Tor^{A'_\Gamma}_{p.q}(\F,\F) \cong 
\bigoplus\limits_{*\ne b\in \Gamma \atop rk(b) \ge q} \tilde H_{p-2}(\Delta(\Gamma_{b,q})).$$
\end{thm}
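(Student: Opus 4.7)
The plan is to compute $\Tor^{A'_\Gamma}_{*,*}(\F,\F)$ first by an explicit chain-level calculation and then deduce the $\Ext$ statement by $\F$-linear duality. Reducing to $\Tor$ is natural because the bar complex is more transparently indexed by the combinatorics of $\Gamma$ than any injective resolution would be.

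My first step would be to pin down a concrete normal-form monomial basis for $A'_\Gamma$ compatible with both the original grading and the rank filtration. Since $A'_\Gamma$ is the associated graded of $A_\Gamma$ with respect to the rank filtration, such a description should be available from the definition in \cite{GRSW} together with the filtration-related structure established in Part I of this series. In particular, one expects the algebra generators to correspond to covering edges $x\to y$ in $\Gamma$ and the relations, after passing to the associated graded, to become combinatorially rigid enough that a PBW-style basis can be read off directly from chains in $\Gamma$.

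With that basis in hand, I would form the reduced bar complex $\F\otimes_{A'_\Gamma} \BBar(A'_\Gamma)\otimes_{A'_\Gamma}\F$ whose homology computes $\Tor^{A'_\Gamma}(\F,\F)$, and decompose its bi-graded piece $(p,q)$ according to the ``top'' element of each basis tensor. The expected picture is that the summand indexed by $b\in\Gamma$ with $rk(b)\ge q$ consists of those bar chains based at $b$ whose intermediate factors lie in $\Gamma_{b,q}$, with the bar differential mirroring the simplicial boundary on $\Delta(\Gamma_{b,q})$. The shift by $2$ in the formula reflects that two of the $p$ tensor factors are absorbed by the endpoints (the terminal edge into $*$ and the initial edge at $b$), leaving $p-2$ intermediate elements that form a simplex in the order complex. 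Taking homology then yields the $\Tor$ statement (with reduced homology in dimension $p-2$ absorbing the contribution of the empty chain for $p=1$), and dualizing over $\F$ gives the $\Ext$ statement.

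The main obstacle is the middle step: showing that the reduced bar complex really does split as a direct sum indexed by $b\in\Gamma$ and that, for each such $b$, the summand is isomorphic as a chain complex to $\tilde C_{p-2}(\Delta(\Gamma_{b,q});\F)$. A naive basis will not give this clean splitting — one needs a basis whose multiplication table is controlled enough by the rank filtration that each bar term has a well-defined ``top element'' $b$ and that the differential preserves the subposet $\Gamma_{b,q}$. Verifying this compatibility, and matching signs and face maps with the simplicial boundary, is where the real work lies; once it is done, the reduction to simplicial (co)homology is essentially formal.
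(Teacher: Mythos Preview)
Your overall architecture is correct: compute $\Tor$ first using the good monomial basis, index by a top element $b$, and dualize for $\Ext$.  But the proposed mechanism---decomposing the reduced bar complex itself as a direct sum of simplicial chain complexes---will not work as stated.  In bidegree $(p,q)$ the bar complex has basis the $p$-fold tensors of good monomials, and most of those monomials are themselves long products $e'(z_1,j_1)\cdots e'(z_r,j_r)$, not single edges.  So the bar chains are far from being indexed by chains $b>b_{p-2}>\cdots>b_0$ in $\Gamma_{b,q}$, and the bar differential (which multiplies adjacent factors) does not preserve any such putative summand.  At best you would get a filtration and a spectral sequence, not a chain-level splitting; the ``main obstacle'' you flag is in fact fatal to the bar-complex route unless you change tactics.

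The paper sidesteps this by never trying to split the bar complex.  Instead it writes down by hand a much smaller free resolution
\[
A'\otimes \hat C_\bullet \longrightarrow A' \longrightarrow \F,
\qquad
\hat C_\bullet \;=\; \bigoplus_{b,q} C_\bullet(\Delta(\Gamma_{b,q})),
\]
whose free generators are precisely the simplicial chains, with a custom differential $d$ that is the simplicial boundary plus a single ``pop the top vertex and multiply by $f(b,b_n)$'' term.  The entire content of the proof is then showing this complex is exact, and that is done not by comparing to the bar complex but by exhibiting an explicit $\F$-linear contracting homotopy $\zeta$.  The formula for $\zeta$ is where the good monomial basis really enters: on $m\otimes\beta$ with $m=e'(z_1,j_1)\cdots e'(z_r,j_r)$ a good monomial, $\zeta$ either peels off the last factor $e'(z_r,j_r)$ (when $(z_r,j_r)\mydot(b,k)$) or does nothing, and one checks $\zeta d+d\zeta=1$ by a five-case analysis.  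Once exactness is established, applying $\F\otimes_{A'}(-)$ kills the multiplication term in $d$, leaving exactly $(\hat C_\bullet,\hat\delta)$, and the $\Tor$ formula drops out.  So the basis is used not to index bar chains but to build a homotopy on a bespoke resolution; that is the idea your proposal is missing.
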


Theorem \ref{intro1} utilizes the standard conventions
$\tilde H^n(\Delta(\emptyset)) = 0$ for $n\ne -1$ and $\tilde H^{-1}(\Delta(\emptyset)) = \F$.  

The theorem was motivated by the calculation of the Hilbert series of $A_\Gamma$ (or equivalently $A'_\Gamma$), as given in 
\cite{RSW4}.  In turn, \ref{intro1} gives a new proof of that theorem.  Such a  statement is a bit misleading since the proof of \ref{intro1} 
relies heavily on the canonical basis result of \cite{GRSW}, which is also the essential fact used to prove the Hilbert series calculation.  
There is a significant difference, however:  in the calculation of the Hilbert series, the basis served as something to count, while in our 
work the basis serves the purpose of helping to define a homotopy on a specific chain complex.

Recall that a poset $\Gamma$ is {\it Cohen-Macaulay} if, for all $a<b$ in $\Gamma$:
$$\tilde H^n(\Delta((a,b)))\ne 0 \hbox{ implies } n = \dim(\Delta((a,b)).$$
Recall also from \cite{RSW2} the definition of {\it uniform} for $\Gamma$ (see \ref{uniform}).  
As an application of \ref{intro1} we easily obtain:

\begin{thm}\label{intro2}
For any finite ranked poset $\Gamma$:

{\rm (1)} $A'_\Gamma$ is quadratic if and only if $\Gamma$ is uniform.

{\rm (2)} $A'_\Gamma$ is Koszul if and only if $\Gamma$ is Cohen-Macaulay.

\end{thm}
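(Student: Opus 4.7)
The plan is to apply Theorem \ref{intro1} to convert each algebraic condition on $A'_\Gamma$ into a topological condition on the order complexes $\Delta(\Gamma_{b,q})$ and then to match those against the combinatorial conditions uniform and Cohen-Macaulay. I would first recall the standard $\Ext$ characterisations for a connected graded $\F$-algebra generated in degree one: quadratic iff $\Ext^{2,q}(\F,\F) = 0$ for $q \ne 2$, and Koszul iff $\Ext^{p,q}(\F,\F) = 0$ for $p \ne q$. From the definition, $\Gamma_{b,q}$ consists of elements of rank in the contiguous window $[rk(b)-q+1,\,rk(b)-1]$, so $\dim \Delta(\Gamma_{b,q}) \le q-2$ and $\Gamma_{b,1} = \emptyset$ automatically. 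Substituting Theorem \ref{intro1}, each vanishing becomes a concrete statement about specific reduced cohomology of the $\Delta(\Gamma_{b,q})$.

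For Koszulity (part 2), the resulting condition is that for every $b\ne *$ and every $1 \le q \le rk(b)$, $\Delta(\Gamma_{b,q})$ has reduced cohomology concentrated in its top degree $q-2$. For ``$\Gamma$ CM $\Rightarrow$ $A'_\Gamma$ Koszul'' I would invoke the Baclawski--Stanley principle that rank-selected subposets of a Cohen--Macaulay poset inherit the CM property, applied to the $\Gamma_{b,q}$ viewed as rank-selections of the (assumed CM) intervals $(*,b)$. For the converse, specialising to $q = rk(b)$ gives top-degree cohomology of every $\Delta((*,b))$; I would promote this to CM-ness of an arbitrary open interval $(a,b)$ with $a \ne *$ by induction on $rk(b)-rk(a)$, using that the link of $a$ in $\Delta((*,b))$ is the join $\Delta((*,a)) * \Delta((a,b))$ and applying K\"{u}nneth to extract the top-degree cohomology of $(a,b)$ from the given top-degree cohomology of the link together with the inductively known cohomology of $(*,a)$.

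For quadraticity (part 1), the condition reduces to $\tilde H^0(\Delta(\Gamma_{b,q})) = 0$ for every $q \ne 2$ and every $b$ with $rk(b) \ge q$. The case $q = 1$ is automatic and $q = 2$ is unconstrained, so the only content is that for $q \ge 3$ each $\Delta(\Gamma_{b,q})$ must be connected (or empty). I would then unpack the definition of uniform from \ref{uniform} and verify that it is precisely this connectedness assertion, which I expect to be a direct comparison of definitions yielding both directions at once.

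The main obstacle is the Koszul-to-CM direction of (2). The $\Gamma_{b,q}$'s give topological control only over subposets of $(*,b)$ of a very restricted shape (contiguous top-rank selections), so propagating the vanishing out to arbitrary open intervals $(a,b)$ with $a \ne *$ is where the argument has to work; the link-K\"{u}nneth induction outlined above is the intended tactic, and some care will be needed to arrange the induction so that the full family of $\Gamma_{b',q'}$-hypotheses is actually used (not just the $q' = rk(b')$ case) and to avoid circularity. The quadratic matching in (1) should by contrast be essentially a definition-chase.
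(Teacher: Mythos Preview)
Your treatment of part (1) is essentially the paper's argument: translate quadraticity into $\Ext^{2,q}=0$ for $q\ge 3$, then via Theorem~\ref{intro1} into connectedness of every $\Gamma_{b,q}$ with $q\ge 3$, and match this against the definition of uniform. The paper adds one small observation you glossed over: uniform as stated in Definition~\ref{uniform} is literally the connectedness of the $\Gamma_{b,3}$'s only, and one must note that $\Gamma_{b,3}$ connected forces $\Gamma_{b,q}$ connected for every $q\ge 3$ (every element of $\Gamma_{b,q}$ lies under some element of rank $rk(b)-1$). This is a one-line check.

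For part (2), your forward direction (Cohen--Macaulay $\Rightarrow$ Koszul) is correct but genuinely different from the paper. You invoke the Baclawski--Stanley rank-selection theorem, applied to each closed interval $[*,b]$, which immediately gives that each $\Gamma_{b,q}$ has top-concentrated cohomology. The paper instead proves this self-containedly using the long exact sequence of the pair $(\Delta(\Gamma_{b,p}),\Delta(\Gamma_{b,p-1}))$, after identifying the relative cohomology as $\bigoplus_{a\in S_b(p-1)}\tilde H^{\bullet-1}(\Delta((a,b)))$. Your route is shorter if one is willing to quote rank-selection; the paper's is elementary and stays inside the objects already on the table.

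Your converse direction (Koszul $\Rightarrow$ Cohen--Macaulay), however, has a real gap. You propose to deduce top-degree cohomology of an arbitrary $(a,b)$ from the link of $a$ inside $\Delta((*,b))$, via K\"unneth for the join $\Delta((*,a))*\Delta((a,b))$. The problem is twofold. First, that link is \emph{not} one of the $\Gamma_{b',q'}$'s, so its cohomology is not ``given'' by the Koszul hypothesis; nothing in the hypothesis tells you the link has top-concentrated cohomology. Second, even granting that, K\"unneth cannot be inverted: if $\tilde H^*(\Delta((*,a)))$ happens to vanish in all degrees, the join formula gives you no information whatsoever about $\tilde H^*(\Delta((a,b)))$. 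The paper avoids this entirely by using the \emph{same} long exact sequence as above in the other direction: the term $\bigoplus_{a\in S_b(p-1)}\tilde H^{n}(\Delta((a,b)))$ sits between $\tilde H^{n}(\Delta(\Gamma_{b,p-1}))$ and $\tilde H^{n+1}(\Delta(\Gamma_{b,p}))$, and both of these are zero by hypothesis whenever $n<\dim\Delta((a,b))$. That relative-cohomology identification is the missing idea in your sketch.
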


The ``if'' part of \ref{intro2}, (1) goes all the way back to \cite{RSW2}, but the proof given here is new. 
The ``if'' part of \ref{intro2} (2) can be found in \cite{KS}.  Again, the proof here is entirely new.  Theorem \ref{intro2} places the algebras $A'_\Gamma$ onto the long list of classical and neo-classical results relating homological properties of algebras to the Cohen-Macaulay 
property of posets (see for example: \cite{BGSsurvey}, \cite{Polo}, \cite{Reisner}, \cite{Stanley75} and \cite{Woodcock}). 

Since $A'_\Gamma$ is an associated graded algebra of $A_\Gamma$, there is a standard spectral sequence with $E^1$-page 
$E^1= \Ext_{A'_\Gamma}(\F,\F)$, converging to $gr(\Ext_{A_\Gamma}(\F,\F))$.   In particular the dimension of the space 
$\Ext_{A'_\Gamma}^{p,q}(\F,\F)$ must 
dominate the dimension of the space $\Ext_{A_\Gamma}^{p,q}(\F,\F)$.  But there is no particular reason why these dimensions need to be the same and indeed we have:

\begin{thm}\label{intro3} There exist finite ranked posets $\Gamma$ for which:
$$gr(\Ext_{A_\Gamma}(\F,\F)) \ne  \Ext_{A'_\Gamma}(\F,\F)$$
and
$$gr(\Tor^{A_\Gamma}(\F,\F)) \ne \Tor^{A'_\Gamma}(\F,\F).$$
\end{thm}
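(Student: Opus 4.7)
The plan is constructive: exhibit an explicit finite ranked poset $\Gamma$, use Theorem \ref{intro1} to compute $\Ext_{A'_\Gamma}^{p,q}(\F,\F)$ in the relevant low bidegrees, and then argue directly from the defining presentation of $A_\Gamma$ (using the canonical basis of \cite{GRSW}) that $\Ext^{p,q}_{A_\Gamma}(\F,\F)$ is strictly smaller at some bidegree $(p,q)$. Because the filtration on $\Ext_{A_\Gamma}$ is bounded in each internal degree, the dimensions of $gr(\Ext_{A_\Gamma}^{p,q})$ and $\Ext_{A_\Gamma}^{p,q}$ agree, so any strict drop detected by Theorem \ref{intro1} suffices. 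Equivalently, in the spectral sequence $E_1 = \Ext_{A'_\Gamma}(\F,\F) \Rightarrow gr(\Ext_{A_\Gamma}(\F,\F))$ mentioned in the introduction, it is enough to produce one nonzero differential.

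The most natural target is a non-uniform $\Gamma$, so that by Theorem \ref{intro2}(1), $A'_\Gamma$ fails to be quadratic and Theorem \ref{intro1} guarantees a class in $\Ext^{2,q}_{A'_\Gamma}(\F,\F)$ with $q \ge 3$, arising from $\tilde H^0(\Delta(\Gamma_{b,q}))$ for some $b$ with a disconnected $\Delta(\Gamma_{b,q})$. I would then look at the defining relations of $A_\Gamma$ recorded in \ref{original}: each ``non-quadratic'' relation of $A'_\Gamma$ lifts to a relation in $A_\Gamma$ whose top-rank part reproduces that non-quadratic relation but which may carry lower-rank correction terms. Exploiting the canonical basis, I would search for a $\Gamma$ in which these correction terms let the lifted relation be rewritten as a consequence of the purely quadratic relations of $A_\Gamma$; in that case the corresponding $\Ext^{2,q}$-class dies in $A_\Gamma$ while persisting in $A'_\Gamma$, yielding the desired strict inequality. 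The Tor statement follows by the parallel argument with $\Tor_{p,q}$ in Theorem \ref{intro1}, or by dualizing.

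The main obstacle is producing a genuine example rather than a plausibility argument: the poset must be small enough that the presentation of $A_\Gamma$ can be wrestled with by hand (or modest computer algebra), yet contain at least one interval $\Gamma_{b,q}$ whose order complex is disconnected so that Theorem \ref{intro1} supplies a candidate non-quadratic class, and the correction terms in the $A_\Gamma$-relations must genuinely conspire to kill it. My expectation is that a rank-two or rank-three poset with a small disconnected interval will work; once such a $\Gamma$ is pinned down, the verification reduces to an explicit calculation of a single low-bidegree Ext (and Tor) group, confirming the strict drop from $\Ext_{A'_\Gamma}^{2,q}$ to $gr(\Ext_{A_\Gamma}^{2,q})$.
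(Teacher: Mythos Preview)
Your overall strategy---exhibit an explicit $\Gamma$, compute $\Ext_{A'_\Gamma}$ from Theorem~\ref{intro1}, then analyze the presentation of $A_\Gamma$ by hand and compare---is precisely what the paper does. The paper's example is a rank-$4$ poset on seven nontrivial vertices; Theorem~\ref{intro1} gives $\dim\Ext^{2,3}_{A'_\Gamma}=2$ (from two disconnected $\Gamma_{b,3}$'s with $rk(b)=3$), while a direct brute-force inspection of the defining relations of $A_\Gamma$ shows $\dim\Ext^{2,3}_{A_\Gamma}\le 1$.

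Two of your search heuristics are slightly off, though, and would have slowed you down. First, rank two is hopeless (there are no cubic relations at all), and the paper's example genuinely uses rank four: the collapse is driven by a top element $B$ sitting over the two rank-$3$ elements whose intervals are disconnected, so that the path relations through $B$ entangle the two cubic relations below it. Second, the mechanism is not that a cubic relation of $A_\Gamma$ becomes a consequence of the \emph{purely quadratic} ones; rather, the two lifted cubic relations become linearly dependent modulo the quadratic relations, leaving one genuine cubic relation in $A_\Gamma$. This distinction is forced by the Hilbert-series constraint you did not mention: since $A_\Gamma$ and $A'_\Gamma$ share a Hilbert series, the alternating sum $\sum_p(-1)^p\dim\Ext^{p,q}$ agrees for every $q$, so a drop at $\Ext^{2,3}$ must be matched by an equal drop at $\Ext^{3,3}$ (in the paper's example, from $1$ to $0$). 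Equivalently, the spectral-sequence differential you hope to detect must land on a nonzero target, so the example has to supply not only a disconnected $\Gamma_{b,q}$ but also a nonzero $\tilde H^1(\Delta(\Gamma_{b',q}))$ to receive it.
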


We note the connection of this paper to the companion paper \cite{KS}.  In that paper it was shown that for a certain auxiliary algebra, 
$R_\Gamma$,  the following are equivalent: (1) $\Gamma$ is uniform and $R_\Gamma$ is Koszul, (2)  $\Gamma$ is Cohen-Macaulay.  When the poset $\Gamma$ is uniform, the algebra $A_\Gamma$ is quadratic and $R_\Gamma$ is simply the quadratic dual of the algebra 
$A'_\Gamma$.  Thus \ref{intro2} and \ref{intro3} combine to give a new proof of the result quoted from \cite{KS}.   It 
was also shown in \cite{KS} that when $\Gamma$ is Cohen-Macaulay, the cohomology of the order complex of 
$\Gamma\setminus\{*\}$ can be recovered from certain cohomology classes attached to $R_\Gamma$.  That 
result has no analog here. It was additionally shown in \cite{KS} that there are non-Koszul but numerically 
Koszul algebras of the form $R_\Gamma$ ($\Gamma$ uniform).  In light of \ref{intro3}, we still do not know if  
all such examples also generate non-Koszul but numerically Koszul algebras $A_\Gamma$ (although some such examples clearly do generate non-Koszul numerically Koszul algebras $A_\Gamma$). 

Finally, the results here shed light on many of the previous known results about splitting algebras.  In addition to the overlap with results from \cite{RSW2}, \cite{RSW4} and \cite{KS} already mentioned, many of the results of 
\cite{CPS}, \cite{RSW3} and \cite{SadSh}  follow immediately from the results here.

\section{Definitions, preliminaries and the canonical basis}\label{prelim}

\begin{defn} A finite ranked poset is a poset $\Gamma$, with strict order $<$, satisfying the following two properties:

(1) $\Gamma$ has unique minimal element $*$,

(2)  For any $x\in \Gamma$ any two maximal chains in $[*,x]$ have the same length. 
\end{defn}

The common maximal chain length in (2) above is the {\it rank} of $x$ in $\Gamma$ and is denoted $rk_\Gamma(x)$, 
or if there is no possiblity of confusion simply $rk(x)$. Whenever $y<x$ we write $d(x,y) = rk(x)-rk(y)$.  
Let $\Gamma(k)$ be the elements of $\Gamma$ of rank $k$. It is convenient to let $\Gamma_+ = \Gamma\setminus\{*\}$.  For 
$x,y\in \Gamma$ we say $x$ covers $y$, and write $x\to y$ if $y<x$ and $d(x,y)=1$.  This definition
makes  $\Gamma$ into a directed ({\it layered}) graph with edges $x\to y$ and layers $\Gamma(k)$.  We will typically write either $e=x\to y$ or $x\buildrel e\over \to y$ to indicate that $e$ is the directed edge from $x$ to $y$.

For any $a<b$ in $\Gamma$, let $\Pi(b,a)$ be the set of all paths 
$$\pi=(b\buildrel e_1\over \to b_1\buildrel e_2\over \to b_2 \buildrel e_3\over \to \cdots b_{n-1} 
	\buildrel e_n \over \to a)$$
where $n=d(b,a)$.   Let $W$ be the $\F$-vector space whose basis is the edges of the graph $\Gamma$ and 
$T_\F(W)$ the free $\F$-algebra on $W$. Let 
$s$ be a central indeterminate.  Given $\pi\in \Pi(b,a)$, as above, define:
$$P(\pi,s) = (s-e_1)(s-e_2)\cdots (s-e_n)   = \sum\limits_{j=0}^ n e(\pi,j) s^{n-j} \in T_\F(W)[s].$$
Note that $e(\pi,0)=1$ and $e(\pi,n) = e_1e_2\cdots e_n$. 

\begin{defn}\label{original}{\rm (\cite{GRSW})}  The splitting algebra of $\Gamma$ over the field $\F$ is the quotient algebra $A_\Gamma=T_\F(W)/I$ where $I$ is the ideal generated by 
$$\{e(\pi,j) - e(\pi',j) \,|\, \pi,\pi'\in \Pi(b,a), \, a<b\in \Gamma,\, 0\le j\le d(b,a)\}.$$
\end{defn}

We let each edge have degree 1 and note that the relation $e(\pi,j) = e(\pi',j)$ is homogeneous of degree $j$, so that $A_\Gamma$ inherits a grading from $T_\F(W)$. 

We note that many of the relations of $A_\Gamma$ are linear.  The following definitions allow one to simultaneously
 eliminate the linear relations and describe a canonical basis of $A_\Gamma$.   For each 
 $x\in \Gamma_+$, choose arbitrarily one edge $e_x$ of the form $x\to y$.  The choice is irrelevant, but 
 must be fixed.  We refer to these as distinguished edges. Then it is easy to see that the elements $e_x$
 form a linearly independent set of generators of 
$A_\Gamma$.   At the same time, for each $x\in \Gamma_+$ let $\pi_x$ be the unique path from 
$x$ to $*$ given by following the distinguished edges and set 
$v_x = - e(\pi_x,1)$.   Then the elements $v_x$ are also a linearly independent set of generators of $A_\Gamma$. 
For convenience we set $\pi_*$ to be the empty path and $v_* = 0$. Conveniently, 
if $e=x\to y$ is any edge, then $e = v_x - v_y$ in $A_\Gamma$.  

For each $x\in \Gamma_+$ and each $1\le j\le rk(x)$ we define $\hat e(x,j)$ to be the image 
in $A_\Gamma$ of the product $e_1e_2\cdots e_j$ where $\pi_x$ is the path 
$x \buildrel e_1\over \to x_1 \buildrel e_2\over \to \cdots \buildrel e_n\over \to *$.  Note that the 
elements $\hat e(x,j)$ are always products of distinguished edges. 

Finally, define a partial order on $\Gamma_+ \times \N$ by $(x,j)\mydot (y,k)$ if and only if $x>y$ and $j = d(x,y)$.  

\begin{defn}\label{good}  Consider a monomial $m$ of the form 
$\hat e(x_1,k_1)\hat e(x_2,k_2)\cdots \hat e(x_r,k_r)$, for some 
$r\ge 0$, $x_i\in \Gamma_+$, $1\le k_i\le rk(x_i)$.  We say that $m$ is a good monomial if for each $1\le i\le r-1$,  
$(x_i,k_i)\notmydot (x_{i+1},k_{i+1})$.    The set of all good monomials will be denoted $\cM_\Gamma$.
\end{defn}

\begin{thm}\label{basis}{\rm (\cite{GRSW}, \cite{RSW1})} Let $\Gamma$ be a finite ranked poset.  
Then the good monomials form a linear basis of $A_\Gamma$. 
\end{thm}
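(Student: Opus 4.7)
My plan is to establish spanning and linear independence of $\cM_\Gamma$ in $A_\Gamma$ separately.

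For spanning, I would first use the $j=1$ relations in $I$ to show that every element of $A_\Gamma$ is a polynomial in the distinguished edges $\{e_x\}$: each edge $e=x\to y$ equals $v_x-v_y$ modulo $I$, and each $v_z$ is a $\F$-linear combination of distinguished edges along the path $\pi_z$. Given a monomial in the $e_x$, I would partition its factors into maximal blocks following a distinguished path of $\Gamma$, producing an expression $\hat e(y_1,k_1)\cdots \hat e(y_r,k_r)$. If this expression is good, we are done. Otherwise some adjacent pair violates the $\notmydot$ condition, meaning $y_{i+1}<y_i$ with $d(y_i,y_{i+1})=k_i$. In this situation the defining relations --- specifically the full-product relation $e(\pi,n)=e(\pi',n)$ for a pair of paths from $y_i$ to $*$ of equal length, one distinguished and one passing through $y_{i+1}$, together with the $j=1$ elimination of non-distinguished edges --- rewrite the offending factor as a $\F$-linear combination of terms whose shapes are strictly smaller in a well-ordering refining the $\mydot$-complexity. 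Iterating the reduction terminates in a $\F$-linear combination of good monomials.

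For linear independence, my approach would be via Bergman's diamond lemma applied to the reduction system above, or, equivalently, to construct an $A_\Gamma$-module structure on the free $\F$-vector space $V$ with basis $\cM_\Gamma$ by letting each $e_x$ act via the reduction rules. If every generator of $I$ acts as zero on $V$, then the map $a\mapsto a\cdot 1_V$ defines a surjection $A_\Gamma\to V$ that identifies each good monomial in $A_\Gamma$ with its corresponding basis vector, forcing $\cM_\Gamma$ to be linearly independent.

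The main obstacle will be verifying confluence, equivalently that the prescribed action of each $e_x$ respects every relation. Ambiguities arise at overlaps coming from triples $a<b<c$ in $\Gamma$ admitting multiple directed paths, and controlling how the elementary symmetric polynomial relations $e(\pi,j)=e(\pi',j)$ for various $j$ interact across such overlaps is where the canonical choice of distinguished paths $\pi_z$ and the combinatorics of the $\mydot$-order on $\Gamma_+\times \N$ become essential.
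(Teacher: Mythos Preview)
The paper does not prove this theorem at all: Theorem~\ref{basis} is quoted from \cite{GRSW} and \cite{RSW1} and used as a black box (its role here is to supply the good-monomial basis that defines the homotopy $\zeta$ in Proposition~\ref{resolution}). So there is no ``paper's own proof'' to compare against.

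That said, your outline is broadly the shape such arguments take---reduce to a normal form for spanning, then verify confluence (diamond lemma) or build a module on the putative basis for independence---and is consistent in spirit with how the cited references proceed. One concrete gap in the spanning sketch: when you hit a bad adjacency $(y_i,k_i)\mydot(y_{i+1},k_{i+1})$, the specific relation you invoke, namely the top-degree identity $e(\pi,n)=e(\pi',n)$ for paths from $y_i$ down to $*$, only touches $\hat e(y_i,k_i)\hat e(y_{i+1},k_{i+1})$ directly when $k_{i+1}=rk(y_{i+1})$. In general the distinguished path of length $k_i$ from $y_i$ lands at some $w\neq y_{i+1}$ of the same rank, and the defining relations compare only paths with \emph{common} endpoints; one has to use the full family of relations $e(\pi,j)=e(\pi',j)$ for intermediate $j$, not just the top one, and organize the induction more carefully (this is exactly where the work in \cite{GRSW} lies). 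You correctly flag confluence as the hard part of independence; specifying the monomial order and resolving the overlap ambiguities coming from nested intervals $a<b<c$ is the substance of the argument, and your proposal does not yet supply it.
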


The rank function on $\Gamma$ induces a rank filtration on $A_\Gamma$, $F^nA_\Gamma$.  The rank of an 
edge $e=x\to y$ is defined to be the rank of $x$ and $F^nA_\Gamma$ is the span of all products of edges,
 $e_1e_2\cdots e_r$ for which $\sum rk(e_i) \le n$.   The associated graded algebra with respect to this filtration 
 will be denoted $A'_\Gamma$.

We note that $\hat e(x,j) \in F^{m(x,j)}A_\Gamma$, where $m(x,j):= j\cdot rk(x) - j(j-1)/2$.  It is convenient to notice that whenever 
$(x,j)\mydot (y,k)$, $m(x,j)+m(y,k) = m(x,j+k)$.

It is useful to have some notation for elements of the associated graded algebra $A'_\Gamma$.  
 
If $e=x\to y$ is an edge of $\Gamma$ and $b\in \Gamma_+$, then we write $e'$ and $v'_b$ for the associated
 elements of $A'_\Gamma$.  Note that in $A_\Gamma$, $e = v_x-v_y$ and hence in $A'_\Gamma$, 
 $e' = v_x'$.  Similarly, for $1\le j\le rk(x)$, we let $e'(x,j)$ be the image in $A'_\Gamma$ of the element $\hat e(x,j)$.  
 It is clear that we have the following $\F$-basis for $A'_\Gamma$:
$$\cM'_\Gamma = \{ e'(x_1,k_1)e'(x_2,k_2)\cdots e'(x_r,k_r)\,|\, r\ge 0, (x_i,k_i)\notmydot (x_{i+1},k_{i+1}) \hbox{ for } 1\le i< r\}.$$
We also refer to these as ``good'' monomials.  We conclude with one fundamental remark and one piece of notation.
\begin{rmk}\label{fundamental}
If $(x,j) \mydot (y,k)$ then $e'(x,j)e'(y,k) = e'(x,j+k)$.
\end{rmk}  

\begin{defn}  For any $b>a$ in $\Gamma$ and $k=d(b,a)$ we set $f(b,a) = e'(b,k)$ in $A'_\Gamma$.
\end{defn}

Equivalently, $f(b,a)$ is the consecutive product of any set of edges leading from $b$ to $a$. One should notice that $f(b,a)f(a,c) = f(b,c)$ whenever $b>a>c$.  It is also important to remember that $f(b,*) = e'(b,rk(b))$ is 
well-defined.  The reader is reminded to observe that the element $a$ cannot be retrieved from the notation $f(b,a)$ (but by the basis statement above, the element $b$ can be retrieved).  

\section{Proof of Theorem \ref{intro1}}

Let $\Gamma$ be a fixed finite ranked poset.  Throughout this section, let $A=A_\Gamma$ and $A'=A'_\Gamma$.  

Because we will be working with the reduced homology chain complexes of many different order complexes at the same time,  we 
must have notation to keep track of which space a given $n$-chain belongs to.  

\begin{defn} Given elements 
$b>b_n>b_{n-1}>\cdots >b_0$ in $\Gamma_+$, with $d(b,b_0)\le q-1$, we will write 
${(b_n>\cdots>b_0)\over b,q}$ to denote the corresponding $n$-chain basis element of the 
$\F$-space of $n$-chains $C_n(\Delta(\Gamma_{b,q}))$.  In particular the symbol ${(\,)\over b,q}$ stands for the basis element of 
$C_{-1}(\Delta(\Gamma_{b,q})) = \F$.  
\end{defn}

The efficiency of this unattractive notation will become apparent.  Next we need notation for the sum of all the 
appropriate order complex homologies.    

\begin{defn}  For each $b\in \Gamma_+$ and $1\le q\le rk(b)$, let $(C_\bullet(b,q), \delta_{b,q})$ be the canonical chain complex for reduced homology of $\Delta(\Gamma_{b,q})$:
$$  \cdots \to C_n(\Delta(\Gamma_{b,q})) \to \cdots  \to C_0(\Delta(\Gamma_{b,q})) \to C_{-1}(\Delta(\Gamma_{b,q})) \to 0.$$  

We define $C_\bullet(q) = \bigoplus\limits_{rk(b)\ge q} C_\bullet(b,q)$, with differential $\delta(q) = \oplus \delta_{b,q}$. 

Finally we
define $\hat C_\bullet = \bigoplus\limits_{q\ge1}C_\bullet(q)$ with chain differential $\hat \delta = \oplus\delta(q)$.  
\end{defn}

The next step is to parlay the chain complex $\hat C_\bullet$ into a chain complex of free left $A'$ modules.

\begin{defn}\label{complex}  For all $n\ge 0$ we define maps 
$$d:A'\otimes \hat C_n \to A' \otimes \hat C_{n-1}$$ 
by extending linearly from the following formula.  
Fix ${(b_n>\cdots>b_0)\over b,q} \in C_n(\Delta(\Gamma_{b,q}))$ for some $b\in \Gamma_+$ and $q\le rk(b)$.  
Let $k = d(b,b_n)$.  Then for $m\in A'$:
$$d(m\otimes {(b_n>\cdots> b_0)\over b,q}) = m \cdot f(b,b_n) \otimes {(b_{n-1}>\cdots>b_0)\over b_n,q-k} - 
		m\otimes \hat \delta({(b_n>\cdots >b_0)\over b,q}).$$
\end{defn}

\begin{lemma}  $(A'\otimes \hat C_\bullet,d)$ is a chain complex of free left  $A'$ modules.
\end{lemma}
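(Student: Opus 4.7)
The freeness of each $A'\otimes\hat C_n$ as a left $A'$-module is immediate: $\hat C_n$ is an $\F$-vector space, and tensoring with $A'$ yields a free $A'$-module on the same basis. The substance of the lemma is the identity $d^2=0$, which I will verify by direct expansion on a basis element $m\otimes\sigma$ with $\sigma={(b_n>\cdots>b_0)\over b,q}$ for $n\ge 1$.

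Set $k=d(b,b_n)$ and $\tau={(b_{n-1}>\cdots>b_0)\over b_n,q-k}$, so that
$$d(m\otimes\sigma)\;=\;m\cdot f(b,b_n)\otimes\tau\;-\;m\otimes\hat\delta\sigma.$$
Apply $d$ once more and group by the simplicial summands of $\hat\delta\sigma=\sum_{j=0}^n(-1)^j\sigma^{[j]}$, where $\sigma^{[j]}$ denotes the chain obtained from $\sigma$ by deleting its $j$-th vertex counted from the top (so $\sigma^{[0]}$ removes $b_n$). Three paired cancellations should fall out. First, the ``top-of-$\tau$'' contribution from $d(m\cdot f(b,b_n)\otimes\tau)$ carries coefficient $m\cdot f(b,b_n)\cdot f(b_n,b_{n-1})$, which by the composition identity $f(b,b_n)f(b_n,b_{n-1})=f(b,b_{n-1})$ (a direct consequence of Remark~\ref{fundamental}) exactly cancels the leading piece of $-d(m\otimes\sigma^{[0]})$, since the new top of $\sigma^{[0]}$ is $b_{n-1}$. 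Second, for each $j\ge 1$ the chain $\sigma^{[j]}$ still has top vertex $b_n$, and its ``rebased top-removal'' is precisely $\tau^{[j-1]}$; summing the resulting contributions $-(-1)^j m\cdot f(b,b_n)\otimes\tau^{[j-1]}$ over $j=1,\ldots,n$ collapses to $m\cdot f(b,b_n)\otimes\hat\delta\tau$, cancelling the $-m\cdot f(b,b_n)\otimes\hat\delta\tau$ produced in the first application. The residual summands assemble into $m\otimes\hat\delta^2\sigma=0$.

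The obstacle here is essentially bookkeeping rather than conceptual: one must keep track of the signs produced by $\hat\delta$ under the reindexing $\sigma^{[j]}\leftrightarrow\tau^{[j-1]}$, and remember which underlying poset $\Gamma_{b',q'}$ each residual chain lives in as the base updates with each top-vertex deletion. The single genuine algebraic input is the transitivity $f(b,b_n)f(b_n,b_{n-1})=f(b,b_{n-1})$ in $A'$, which is precisely what aligns the two ``top-of-top'' terms and is why the definition of $d$ mixes left multiplication by $f(b,b_n)$ with the simplicial boundary in exactly the form written.
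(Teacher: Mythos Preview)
Your argument is correct and follows essentially the same route as the paper's own proof: both expand $d^2$ on a basis element $m\otimes\sigma$ with $n\ge 1$, and both reduce the computation to the transitivity $f(b,b_n)f(b_n,b_{n-1})=f(b,b_{n-1})$ together with $\hat\delta^2=0$. The only difference is organizational --- the paper computes $d$ of each of the two summands of $d(m\otimes\sigma)$ in separate displays and then sums, while you group the result by the three cancellation types --- but the underlying verification is identical.
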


\begin{proof}  Fix ${(b_n>\cdots>b_0)\over b,q} \in C_n(\Delta(\Gamma_{b,q}))$ and $m\in A'$.  We may assume $n\ge 1$.  Set 
$k=d(b,b_n)$ and $j = d(b_n,b_{n-1})$.  Then we have
\begin{align*}
d(m\cdot f(b,b_n)\otimes {(b_{n-1}>\cdots>b_0)\over b_n,q-k} ) 
	&=m\cdot f(b,b_n)f(b_n,b_{n-1})\otimes {(b_{n-2}>\cdots> b_0)\over b_{n-1},q-k-j}\\
	 &\qquad\qquad -m\cdot f(b,b_n) \otimes \hat \delta({(b_{n-1}>\cdots>b_0)\over b_n,q-k})\\
	 &=m\cdot f(b,b_{n-1}) \otimes {(b_{n-2}>\cdots> b_0)\over b_{n-1},q-k-j}\\
	&\qquad\qquad -m\cdot f(b,b_n) \otimes \hat \delta({(b_{n-1}>\cdots>b_0)\over b_n,q-k})\\
\end{align*}
and similarly
\begin{align*}
d(-m\otimes \hat \delta({(b_n>\cdots >b_0)\over b,q}\,))
	&=- d(m\otimes {(b_{n-1}>\cdots>b_0)\over b,q})\\	
	&\qquad   -d(m\otimes \sum\limits_{i=1}^n (-1)^{i}{(b_n>\cdots>b_{n-i+1}>b_{n-i-1}>\cdots >b_0)\over b,q})\\
	&= -m\cdot f(b,b_{n-1}) \otimes {(b_{n-2}>\cdots>b_0)\over b_{n-1},q-k-j}\\
	&\qquad + m\otimes \hat \delta({(b_{n-1}>\cdots>b_0)\over b,q})\\
	& -  m\cdot f(b,b_n) \otimes \sum\limits_{i=1}^n(-1)^{i}{(b_{n-1}>\cdots>b_{n-i+1}>b_{n-i-1}>\cdots >b_0)\over b_n,q-k}\\
	&+ m\otimes \hat\delta \left( \sum\limits_{i=1}^n (-1)^{i}{(b_n>\cdots>b_{n-i+1}>b_{n-i-1}>\cdots >b_0)\over b,q}\right)\\
	&= -m\cdot f(b,b_{n-1}) \otimes {(b_{n-2}>\cdots>b_0)\over b_{n-1},q-k-j}\\
	&\qquad\qquad + m\otimes \hat \delta^2({(b_{n}>\cdots>b_0)\over b,q})\\
	& +  m\cdot f(b,b_n) \otimes \hat\delta({(b_{n-1}>\cdots>b_0)\over b_n,q-k})\\
\end{align*}	
Since $\hat \delta^2 = 0$, the sum of the two expressions above is 0, showing that $d^2=0$ as required.		
\end{proof}

The complex $A'\otimes \hat C_\bullet$ can be augmented by defining $d: A'\otimes \hat C_{-1} \to A'$ via the formula:
$d(m\otimes {(\,)\over b,q} ) = m e'(b,q)$.   We recall that whenever $(x,j)\mydot (y,k)$, $f(x,y)e'(y,k) = e'(x,j+k)$, and hence
$d^2:A'\otimes \hat C_0 \to A$ is zero.

The essential key to the proof of \ref{intro1} is the following:

\begin{prop}\label{resolution}  
$A'\otimes \hat C_\bullet\to A'\to \F$ is a free resolution of the left $A'$-module $\F=A'/A'_+$.  
\end{prop}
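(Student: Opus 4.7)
The plan is to construct an explicit $\F$-linear contracting homotopy $h$ on the augmented complex
$$\cdots \to A'\otimes \hat C_0 \to A'\otimes \hat C_{-1} \to A' \to \F \to 0,$$
using the canonical basis $\cM'_\Gamma$ of $A'$ from Theorem \ref{basis} to describe it. A preliminary step is to check that $d^2 = 0$ persists at the new junction $A'\otimes\hat C_0 \to A'\otimes\hat C_{-1}\to A'$: applied to a basis element $m\otimes{(b_0)\over b,q}$, the two output terms yield $m\cdot f(b, b_0)\cdot e'(b_0, q-d(b, b_0))$ and $m\cdot e'(b, q)$, which are equal by Remark \ref{fundamental}.

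The homotopy $h$ is defined on each basis tensor $w\otimes{(b_n>\cdots>b_0)\over b,q}$ (with $w\in \cM'_\Gamma$ and $n \ge -1$) by a two-case split. Case A: $w$ is trivial, or its rightmost factor $e'(x_r, k_r)$ satisfies $(x_r, k_r)\notmydot(b,q)$ (equivalently, the product $w\cdot e'(b,q)$ is again a good monomial); set $h = 0$. Case B: $w = w'\cdot e'(y,l)$ with $(y,l)\mydot(b,q)$; set
$$h(w\otimes{(b_n>\cdots>b_0)\over b,q}) = w'\otimes{(b>b_n>\cdots>b_0)\over y,\; l+q}.$$
The target chain lies in $C_{n+1}(\Delta(\Gamma_{y, l+q}))$ because $d(y,b_i) = l + d(b,b_i) \le (l+q)-1$ for each $i$. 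At the extreme ends take $h:\F\to A'$ to be $1\mapsto 1$, $h(1_{A'}) = 0$, and $h(m) = w\otimes{(\,)\over y,l}$ when $m = w\cdot e'(y,l)\in\cM'_\Gamma$ has at least one factor.

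The main task is verifying $dh + hd = \mathrm{id}$. In Case A, $dh = 0$ while $hd$ reduces to a single application of Case B of $h$ to the $f(b, b_n)$-piece of $d$ (the $\hat\delta$-pieces of $d$ remain in Case A and are killed by $h$); the essential collapse $e'(b, d(b,b_n))\cdot e'(b_n, q-d(b,b_n)) = e'(b,q)$ comes again from Remark \ref{fundamental}. In Case B, $dh$ contributes the desired term $w\otimes{(b_n>\cdots>b_0)\over b,q}$ from $w'\cdot f(y,b) = w$, along with a correction $-w'\otimes \hat\delta({(b>b_n>\cdots>b_0)\over y, l+q})$; simultaneously $hd$ produces exactly $+w'\otimes\hat\delta({(b>b_n>\cdots>b_0)\over y, l+q})$, its $f(b,b_n)$-piece realizing the ``omit-$b$'' summand of $\hat\delta$ and its $\hat\delta$-piece realizing the remaining summands with matching signs. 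Consistency of the case assignments across these reductions relies on the observation that the relation $(y,l)\mydot(\cdot,\cdot)$ depends only on $y$ and $l$, not on the second argument, so it is preserved by each collapse $e'(y,l)\cdot e'(b,q) = e'(y, l+q)$ that occurs.

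The hardest part will be the sign and index bookkeeping in Case B: the full $\hat\delta$-expansion of the prepended chain must match term-by-term with what $hd$ produces, and this requires care with the shifted indexing that prepending $b$ introduces. Once $dh + hd = \mathrm{id}$ is established, the augmented complex is $\F$-linearly contractible, hence acyclic; since each $A'\otimes\hat C_n$ is $A'$-free by construction, this proves that $A'\otimes\hat C_\bullet\to A'\to\F$ is a free resolution.
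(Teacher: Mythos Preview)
Your proposal is correct and is essentially the paper's own proof: both construct the same $\F$-linear contracting homotopy (the paper calls it $\zeta$) that peels off the rightmost factor $e'(y,l)$ of a good monomial whenever $(y,l)\mydot(b,\cdot)$ and prepends $b$ to the chain, and both verify $dh+hd=\mathrm{id}$ by the same case split. One tiny wording slip: the relation $(y,l)\mydot(b,k)$ depends on $y$, $l$, and $b$ but not on $k$ (you wrote ``not on the second argument,'' which could be misread as ``not on $b$''); the paper exploits exactly this $k$-independence as well, writing the condition as $(z_r,j_r)\mydot(b,k)$ with $k=d(b,b_n)$ rather than your $(y,l)\mydot(b,q)$, but these are identical.
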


\begin{proof}
We define a linear map $\zeta:A'\otimes \hat C_n \to A'\otimes \hat C_{n+1}$ for all $n\ge -1$ by extending linearly from the following formula.   Choose
a basis element $\beta = {(b_n>\cdots >b_0)\over b,q}$ in $C_n(\Delta(\Gamma_{b,q}))$. Let $k = d(b,b_n)$ if $n\ge 0$ and let $k=1$ if $n=-1$.  
Let
 $m$ be any good monomial in $\cM'_\Gamma$ and write 
$m = e'(z_1,j_1)e'(z_2,j_n)\cdots e'(z_r,j_r)$ with the usual conditions: $(z_i,j_i)\notmydot (z_{i+1},j_{i+1})$. 
Let $m' =e'(z_1,j_1)e'(z_2,j_n)\cdots e'(z_{r-1},j_{r-1})$.   Then 
$$\zeta(m\otimes {(b_n>\cdots >b_0)\over b,q}) = \begin{cases} 
		m'\otimes {(b>b_n>\cdots>b_0)\over z_r,q+j_r} & \hbox{if\ } r\ge 1\hbox{ and } (z_r,j_r)\mydot (b,k),\\ 
		0 & \hbox{otherwise}.\\ 
\end{cases}$$

Observe that in the first case of the formula, $f(z_r,b) = e'(z_r,j_r)$. Moreover, when $n\ge 0$, 
$e'(z_r,j_r+k) = e'(z_r,j_r)f(b,b_n)$.

Similarly we define $\zeta:A\to A\otimes \hat C_{-1}$ as follows.  For $m\in \cM'_\Gamma$, exactly as above, define $\zeta(m)$ by
$$\zeta(m) = \begin{cases} 
	m'\otimes {(\,)\over z_r,j_r} &\hbox{ if } r\ge 1,\\
	0 &\hbox{ if } m=1.\\
	\end{cases}
$$

\noindent {\bf Claim:} $z$ is a homotopy, that is: $\zeta d + d\zeta = 1$.  

Case 1: Given $\beta$ and $m$ as above, assume $n\ge 0$, $r\ge1$ and  $(z_r,k_r)\mydot (b,k)$. Using the observation above we have: 
\begin{align*}
d \zeta (m\otimes \beta) &= d(m'\otimes {(b>b_n>\cdots>b_0)\over z_r,q+j_r})\\
& = m\otimes \beta - m' \otimes \hat \delta({(b>b_n>\cdots>b_0)\over z_r,q+j_r}).\\
\end{align*}
We also have:
\begin{align*}
\zeta (m\otimes \hat \delta (\beta)) 
&=\zeta(m\otimes \sum\limits_{i=0}^{n}(-1)^i {(b_n>\cdots >b_{n-i+1}>b_{n-i-1}>\cdots>b_0)\over b,q}))\\
&= m'\otimes \sum\limits_{i=0}^n (-1)^i{(b>b_n>\cdots >b_{n-i+1}>b_{n-i-1}>\cdots>b_0)\over z_r,q+j_r} \\
&= m' \otimes {(b_n>\cdots>b_0)\over z_r,q+j_r} - m'\otimes \hat \delta ( {(b>b_n>\cdots>b_0)\over z_r,q+j_r})\\
\end{align*}
Therefore:
\begin{align*}
\zeta d(m\otimes \beta) & = 
   \zeta(m'e'(z_r,j_r)f(b,b_n) \otimes{(b_{n-1}>\cdots>b_0)\over b_n,q-k}) - \zeta(m\otimes \hat \delta(\beta)) \\
   &=  \zeta(m'e'(z_r,j_r+k) \otimes{(b_{n-1}>\cdots>b_0)\over b_n,q-k}) - \zeta(m\otimes \hat \delta(\beta))\\
   &= m' \otimes {(b_n>\cdots>b_0)\over z_r,q+j_r} - \zeta(m\otimes \hat \delta(\beta))\\
   &= m'\otimes \hat \delta ( {(b>b_n>\cdots>b_0)\over z_r,q+j_r})\\
\end{align*}
Thus $(\zeta d + d\zeta)(m\otimes \beta) = m\otimes \beta$. 

Case 2: Given $\beta$ and $m$ as above, assume $n=-1$, $r\ge1$ and $(z_r,k_r)\mydot (b,1)$.   Then
\begin{align*}
d\zeta(m\otimes \beta) &=d(m'\otimes {(b)\over z_r,q+j_r}) \\
&= m\otimes {(\,)\over b,q} - m'\otimes {(\,)\over z_r,q+j_r}\\
&= m\otimes \beta - m'\otimes {(\,)\over z_r,q+j_r}\\
\end{align*}
On the other hand,
$$
\zeta d(m\otimes \beta) = \zeta( m e'(b,q) ) = \zeta(m' e'(z_r,j_r+q) ) = m'\otimes {(\,)\over z_r,q_r+j}.$$
Hence $(\zeta d + d\zeta)(m\otimes \beta) = m\otimes \beta$.

Case 3:  Given $\beta$ and $m$ as above, assume $n\ge 0$ and $r\ge1$ but $d(z_r,b) \ne j_r$, i.e. $(z_r,k_r)\notmydot (b,k)$.  
Note that in this case, $m f(b,b_n) = m'e'(z_r,j_r)e'(b,k)$ is a good monomial.  Thus $d'\zeta (m\otimes \beta) = 0$, but
\begin{align*}
\zeta d(m\otimes \beta) & = \zeta(m'e'(z_r,j_r)f(b,b_n) \otimes {(b_{n-1}>\cdots>b_0)\over b_n,q-k}) - \zeta(m\otimes \hat\delta (\beta))\\
& = \zeta(m'e'(z_r,j_r)e'(b,k) \otimes {(b_{n-1}>\cdots>b_0)\over b_n,q-k}) - 0\\
&= m\otimes \beta \\
\end{align*}
Thus $(\zeta d + d\zeta)(m\otimes \beta) = m\otimes \beta$. 

Case 4:   Given $\beta$ and $m$ as above, assume $n=-1$ and $r\ge1$ but $d(z_r,b) \ne j_r$, i.e. $(z_r,k_r)\notmydot (b,1)$.
Then $d\zeta(m\otimes \beta) = 0$, whereas
$$\zeta d(m\otimes \beta) = \zeta( m e'(b,q)) = m\otimes {(\,)\over b,q} = m\otimes \beta.$$
Thus $(\zeta d + d\zeta)(m\otimes \beta) = m\otimes \beta$. 

Case 5:  Finally, let $\beta$ and $m$ be as above, but assume $r=0$, i.e. $m=1$.  Then $d\zeta (1\otimes \beta) = 0$ 
and exactly as in cases 3 and 4 above: $\zeta d(1\otimes \beta) = 1\otimes \beta$. 

This completes the proof of the claim.  It follows at once that the chain complex $A'\otimes \hat C_\bullet \to A'$ is acyclic.  But the 
image of $A'\otimes \hat C_{-1}$ in $A'$ is $\sum\limits_{b,q} A'e'(b,q) = A'_+$.  This proves the Proposition.
\end{proof}

\begin{rmk}  It is not difficult to embed the chain complex $A'\otimes \hat C_\bullet$ in the standard bar complex for $A'$. The reader may also have noticed that the formulas for $d$ and $\zeta$ on $A'\otimes \hat C_\bullet$ would work just as 
well with $A$ in place of $A'$ (once the definition of $f(b,a)$ has been modified to be the product of the edges of any path from $b$ to $a$).  However, it is easy to see that $coker(A\otimes \hat C_0 \buildrel d \over \to A\otimes \hat C_{-1})$ is
not isomorphic to $A_+$.   
\end{rmk}

We have all but proved the following version of \ref{intro1}.

\begin{thm} For any finite ranked poset $\Gamma$, $p\ge 1$ and $q\ge 0$:
$$\Tor^{A'}_{p,q}(\F,\F) \cong
\bigoplus\limits_{q\le rk(b)} \tilde H_{p-2}(\Delta(\Gamma_{b,q})).$$
\end{thm}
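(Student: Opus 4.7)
The plan is to compute $\Tor^{A'}_{\bullet,\bullet}(\F,\F)$ directly from the free resolution of $\F$ provided by Proposition \ref{resolution}. Once we have the resolution $A'\otimes \hat C_\bullet \to A' \to \F$, everything should follow by tensoring with $\F$ over $A'$ and observing that the multiplicative part of the differential $d$ from Definition \ref{complex} disappears, leaving only the simplicial boundary $\hat\delta$.

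Concretely, I would first identify $\F \otimes_{A'}(A'\otimes \hat C_n) \cong \hat C_n$ in the obvious way. Under this identification, the induced differential on $\F \otimes_{A'}(A'\otimes \hat C_\bullet)$ sends
$$\beta = {(b_n>\cdots>b_0)\over b,q}\ \longmapsto\
1\cdot f(b,b_n)\otimes{(b_{n-1}>\cdots>b_0)\over b_n,q-k} - 1\otimes \hat\delta(\beta),$$
but the element $f(b,b_n)$ lies in $A'_+$ (it is a nontrivial product of edges in positive degree), so its image in $\F$ vanishes and only $-\hat\delta(\beta)$ survives. Similarly, the augmentation $A'\otimes \hat C_{-1}\to A'$ lands in $A'_+$, so tensoring with $\F$ kills it as well. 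The resulting complex computing $\Tor^{A'}(\F,\F)$ is thus $(\hat C_\bullet,-\hat\delta)$ placed so that $\hat C_n$ is in homological degree $n+2$ (since $A'$ is in degree $0$ and $A'\otimes \hat C_{-1}$ in degree $1$). This yields
$$\Tor^{A'}_p(\F,\F)\cong \tilde H_{p-2}(\hat C_\bullet,\hat\delta)=\bigoplus_{q\ge 1}\bigoplus_{rk(b)\ge q}\tilde H_{p-2}(\Delta(\Gamma_{b,q})).$$

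To read off the internal $q$-grading, I would assign the symbol ${(b_n>\cdots>b_0)\over b,q}$ internal degree $q$. The augmentation $m\otimes {(\,)\over b,q}\mapsto m\,e'(b,q)$ forces this convention, and the principal term of $d$ replaces internal degree $q$ with degree $(q-k)+k=q$, since $f(b,b_n)$ has degree $k$ in $A'$; the simplicial term only shuffles faces and does not change $q$. Hence the resolution splits as a direct sum over $q$ of subcomplexes of pure internal degree $q$, and the same argument applied grade by grade gives the claimed decomposition of $\Tor^{A'}_{p,q}(\F,\F)$.

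The substantive work is really already contained in Proposition \ref{resolution}; the remaining obstacle is essentially bookkeeping — reconciling the homological shift by $2$ that comes from the two augmentation steps with the internal degree shift carried by $f(b,b_n)$, and verifying that nothing beyond $\hat\delta$ survives the tensor product with $\F$. Once these sign and degree conventions are pinned down, the stated isomorphism is immediate.
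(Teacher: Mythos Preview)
Your proposal is correct and follows essentially the same route as the paper: use the free resolution of Proposition \ref{resolution}, tensor with $\F$ over $A'$ so that the $f(b,b_n)$-term of $d$ dies (since $f(b,b_n)\in A'_+$), and identify the resulting complex with $(\hat C_\bullet,\pm\hat\delta)$ shifted by two in homological degree. Your treatment is in fact more explicit than the paper's one-line argument, spelling out both the homological shift and the internal $q$-grading; the paper merely asserts that the degree-$q$ part of $\tilde H_{p-2}(\hat C_\bullet)$ is $\tilde H_{p-2}(C_\bullet(q))$.
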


\begin{proof}  Since $A'\otimes \hat C_\bullet \to A'\to \F$ is a free resolution of $\F$, we see at once that for $p\ge 1$,
$\Tor^{A'}_{p}(\F,\F) \cong \tilde H_{p-2}(\hat C_\bullet)$.   The degree $q$ part of the right hand side of this equation is exactly
$\tilde H_{p-2}(C_\bullet(q))$.   This observation proves the theorem. 
\end{proof}

We have proved the second formula of \ref{intro1}. The first formula of \ref{intro1} follows by duality. 
 
\section{Proof of \ref{intro2} and some simple combinatorics}

There are two small pieces of combinatorics required to derive \ref{intro2} from \ref{intro1}. 
The first is the definition of uniform 
as it originated in \cite{RSW2}.  For $b\in \Gamma$, the set $s_b(k)$ is defined to be 
$\{ a\in \Gamma\, |\,d(b,a)= k\}$.

\begin{defn}\label{uniform} 
Let $\Gamma$ be a ranked poset.  For $x\in \Gamma$ and $a,b \in S_x(1)$, write
$a\sim_x b$ if there exists $c\in S_a(1) \cap S_b(1)$ and extend $\sim_x$ to an equivalence relation on 
$S_x(1)$.   We say that 
$\Gamma$ is {\it uniform} if, for every $x\in \Gamma$, $\sim_x$ has a unique equivalence class.  
\end{defn}

It is apparent that $\Gamma$ is uniform if and only if the subgraphs $\Gamma_{b,3}$ are connected as graphs 
for every $b$ of rank at 
least $3$ in $\Gamma$.  It is also clear that if $\Gamma_{b,n}$ is disconnected for some $n>3$, then 
$\Gamma_{b,3}$ is also disconnected.   
But the graph $\Gamma_{b,n}$ is connected  if and only if $\tilde H^0(\Delta(\Gamma_{b,n})) = 0$.  Thus we have
$$\Gamma \hbox{ is uniform if and only if } \bigoplus\limits_{rk(b) \ge q\ge 3} \tilde H^0(\Delta(\Gamma_{b,q})) = 0.$$
By \ref{intro1}, this is equivalent to $\Ext_{A_\Gamma}^{2,q}(\F,\F) = 0$ for all $q>2$, or equivalently that 
$A_\Gamma$ is a 
quadratic algebra.  This proves (1) of \ref{intro2}.

There are now two straightforward ways to prove (2) of \ref{intro2}.  We could cite Theorem 1.2 of \cite{KS} 
and combine it with \ref{intro1} and (1) of \ref{intro2} to get an immediate proof of (2) of \ref{intro2}. 
Another approach is to prove Proposition \ref{prop}, given below.  This then gives a completely new proof of 
Theorem 1.2 of \cite{KS}.

From \ref{intro1}, we see that the algebra $A'_\Gamma$ is Koszul if and only if $\Gamma$ satisfies the 
following combinatorial property:
\begin{center}
(*) \ \ \  For $b\in \Gamma_+$ and all triples $rk(b) \ge q > n$;  $\tilde H^{n-2}(\Delta(\Gamma_{b,q})) = 0$.\qquad\qquad
\end{center}
Recall that $\dim \Delta(\Gamma_{b,q}) = q-2$. 

\begin{prop}\label{prop}  Let $\Gamma$ be a finite ranked poset.  Then $\Gamma$ is Cohen-Macaulay if and only if 
$\Gamma$ satisfies condition {\rm (*)}.
\end{prop}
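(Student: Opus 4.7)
The plan is to extract both implications from a single long exact sequence relating $\Delta(\Gamma_{b,q})$ to $\Delta(\Gamma_{b,q+1})$ and to the intervals $(a,b)$ for $a$ of rank $rk(b)-q$.

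First I would set up the topological input. For $q < rk(b)$, write $A_q = \{a : a<b,\ rk(a)=rk(b)-q\}$; these are exactly the elements added in passing from $\Gamma_{b,q}$ to $\Gamma_{b,q+1}$. Since $A_q$ is an antichain, every simplex of $\Delta(\Gamma_{b,q+1})$ meeting $A_q$ contains exactly one vertex of $A_q$ and has the form $\{a\}\cup\sigma$ with $\sigma\in\Delta((a,b))$. Moreover $\Delta((a,b))\subset\Delta(\Gamma_{b,q})$ because every $x\in(a,b)$ has $rk(x)>rk(a)$. Thus $\Delta(\Gamma_{b,q+1})$ is built from $\Delta(\Gamma_{b,q})$ by attaching, for each $a\in A_q$, the cone $\{a\}\ast\Delta((a,b))$ along its base $\Delta((a,b))$; collapsing this common base yields a wedge of suspensions, so
$$
H^n(\Delta(\Gamma_{b,q+1}),\,\Delta(\Gamma_{b,q}))\;\cong\;\bigoplus_{a\in A_q}\tilde H^{n-1}(\Delta((a,b))).
$$
The long exact sequence of the pair then reads
$$
\cdots \to \bigoplus_{a\in A_q}\tilde H^{n-1}(\Delta((a,b))) \to H^n(\Delta(\Gamma_{b,q+1})) \to H^n(\Delta(\Gamma_{b,q})) \to \bigoplus_{a\in A_q}\tilde H^n(\Delta((a,b))) \to H^{n+1}(\Delta(\Gamma_{b,q+1})) \to \cdots
$$
and this is the single tool I intend to use throughout.

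For (*) $\Rightarrow$ Cohen-Macaulay, fix $a<b$ with $k = d(a,b)$. If $a = *$, then $(a,b) = \Gamma_{b,rk(b)}$ and (*) gives the vanishing at once, so assume $a \ne *$, hence $k < rk(b)$, and apply the LES with $q = k$ (note $a\in A_k$). For $n < k-2$, condition (*) simultaneously kills $H^n(\Delta(\Gamma_{b,k+1}))$, $H^n(\Delta(\Gamma_{b,k}))$, and $H^{n+1}(\Delta(\Gamma_{b,k+1}))$, so the sequence pinches $\bigoplus_{a'\in A_k}\tilde H^n(\Delta((a',b)))$ between zeros; the summand for $a$ is the desired vanishing.

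For Cohen-Macaulay $\Rightarrow$ (*), fix $b$ and $q \le rk(b)$, and iterate the LES for $q' = q, q+1, \ldots, rk(b)-1$. For each such $q'$, every $a'\in A_{q'}$ satisfies $d(a',b) = q'$, so the CM hypothesis gives $\tilde H^i(\Delta((a',b))) = 0$ for $i < q'-2$. For any $n < q-2$ and any $q' \ge q$ one has $n-1 < n < q-2 \le q'-2$, so both outer terms of the LES at that degree vanish and the middle portion collapses to an isomorphism $H^n(\Delta(\Gamma_{b,q'+1})) \cong H^n(\Delta(\Gamma_{b,q'}))$. Telescoping these isomorphisms from $q' = q$ up to $q' = rk(b)-1$ identifies $H^n(\Delta(\Gamma_{b,q}))$ with $H^n(\Delta((*,b)))$, which the CM hypothesis kills in the range $n < rk(b)-2$; since $n < q-2 \le rk(b)-2$, this range is satisfied.

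The only real care needed is in the degree bookkeeping of the telescoping step: one must confirm that the vanishing of both the $\tilde H^{n-1}$ and $\tilde H^n$ outer terms persists at \emph{every} intermediate $q'$, not merely at the extremes, which is exactly what the inequality $q' \ge q$ provides. The cone-attaching description of $\Delta(\Gamma_{b,q+1})$ and the resulting identification of the relative cohomology are direct consequences of the definitions and require no further hypothesis on $\Gamma$.
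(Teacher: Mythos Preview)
Your proof is correct and uses the same key ingredient as the paper: the long exact sequence of the pair $(\Delta(\Gamma_{b,q+1}),\Delta(\Gamma_{b,q}))$ together with the identification of the relative cohomology as $\bigoplus_{a}\tilde H^{\bullet-1}(\Delta((a,b)))$. The only difference is organizational: for the direction Cohen--Macaulay $\Rightarrow$ (*), the paper argues by contradiction (choosing a minimal failing $q$, splitting off the boundary case, and then pushing the nonvanishing class upward step by step until it reaches $(*,b)$), whereas you run the same long exact sequence directly as a telescoping chain of isomorphisms $H^n(\Delta(\Gamma_{b,q}))\cong H^n(\Delta(\Gamma_{b,q+1}))\cong\cdots\cong H^n(\Delta((*,b)))$. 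Your packaging is a bit cleaner since it avoids the case split, but the mathematical content is identical.
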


\begin{proof}
We start with the following observation from \cite{KS}.  Choose $b\in \Gamma_+$ and $p\le rk(b)$.  Then 
$\Delta(\Gamma_{b,p-1})$ is a closed subspace of $\Delta(\Gamma_{b,p})$.   The relative cohomology group 
$H^n(\Delta(\Gamma_{b,p}),\Delta(\Gamma_{b,p-1}))$  decomposes as 
$$H^n(\Delta(\Gamma_{b,p}),\Delta(\Gamma_{b,p-1})) = 
	\bigoplus\limits_{a\in S_b(p-1)} \tilde H^{n-1}(\Delta(\,(a,b)\,)).$$
We thus get a long exact sequences in cohomology:

\begin{align*}
\cdots \to \tilde H^n(\Delta(&\Gamma_{b,p})) \to  \tilde H^n(\Delta(\Gamma_{b,p-1})) \to
	\bigoplus\limits_{a\in S_b(p-1)}  \tilde H^{n}(\Delta(\,(a,b)\,))\\
	& \to  \tilde H^{n+1}(\Delta(\Gamma_{b,p})) \to \tilde H^{n+1}(\Delta(\Gamma_{b,p-1})) \to\cdots\\
\end{align*}

Suppose now that $\Gamma$ is Cohen-Macaulay, but fails condition (*).  Choose a triple $rk(b)\ge q>n$ for which
$\tilde H^{n-2}(\Delta(\Gamma_{b,q})) \ne 0$.  We may assume $q$ is minimal amongst all such examples.  But then
either $\tilde H^{n-2}(\Delta(\Gamma_{b,q-1})) = 0$ or $n=q-1$.  In the former case, by the long exact sequence, 
$\tilde H^{n-3}(\Delta(\,(a,b)\,)) \ne 0$ for some $a\in S_b(q-1)$.  Since $dim(\Delta(\,(a,b)\,) = q-3 >n-3$,  
this contradicts 
Cohen-Macaulay. Hence we must have $n=q-1$. 

Now we note that $rk(b)$ must be strictly larger than $q$, since otherwise $\Gamma_{b,q} = (*,b)$ and we would have
$\tilde H^{q-3}(\Delta(\,(*,b)\,)) \ne 0$, contradicting Cohen-Macaulay again.  Hence we have 
$\Delta(\Gamma_{b,q})$ as a 
proper closed subspace of $\Delta(\Gamma_{b,q+1})$.  Using Cohen-Macaulay 
again, the long exact sequence implies 
$\tilde H^{q-3}(\Delta(\Gamma_{b,q+1})) \ne 0$.  We can repeat this argument again and again, to get
$\tilde H^{q-3}(\Delta(\Gamma_{b,q+j})) \ne 0$ for all $j\ge 0$.  But for sufficiently large $j$, 
$\Gamma_{b,q+j} = (*,b)$, and we have arrived at the same contradiction.

Conversely, assume condition (*) holds for $\Gamma$.  Then in the long exact sequence, any summand
 $\tilde H^n(\Delta(\,(a,b)\,))$ is trapped between two terms that must be zero whenever 
 $n< \dim(\Delta(\,(a,b)\,))$.  Hence 
 $\Gamma$ is Cohen-Macaulay.  
\end{proof}

The proof of (2) of \ref{intro2} is now apparent.

\section{Proof of \ref{intro3}}

We prove \ref{intro3} by one very simple example.  Let $\Gamma$ be the finite ranked poset in the figure below.  Throughout this section let $A= A_\Gamma$ and $A'= A'_\Gamma$.

\begin{center}
\includegraphics[width=.85in]{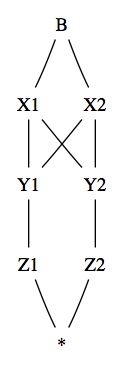}
\end{center}

\begin{lemma}\label{cex1}  The dimensions of the non-zero spaces $\Ext^{p,q}_{A'}(\F,\F)$ are as follows:
\begin{align*} 
\dim(\Ext^{1,1}_{A'}(\F,\F))&=7\\
\dim(\Ext^{2,2}_{A'}(\F,\F))&=3\\
\dim(\Ext^{2,3}_{A'}(\F,\F))&=2\\
\dim(\Ext^{3,3}_{A'}(\F,\F))&=1\\
\dim(\Ext^{3,4}_{A'}(\F,\F))&=1\\
\end{align*}
\end{lemma}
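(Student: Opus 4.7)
The plan is to apply Theorem \ref{intro1} directly, which expresses $\Ext^{p,q}_{A'}(\F,\F)$ as $\bigoplus_{rk(b)\ge q}\tilde H^{p-2}(\Delta(\Gamma_{b,q}))$, and then to run through the finitely many pairs $(b,q)$ with $b\in\Gamma_+$ and $1\le q\le rk(b)$, computing the subposet $\Gamma_{b,q}$ and the reduced cohomology of its order complex one at a time. The preliminary step is to label the seven non-minimal elements of $\Gamma$ from the figure, record the rank of each, and tabulate the covering relations so that the subposets $\Gamma_{b,q}$ can be read off mechanically.

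For $(p,q)=(1,1)$, $\Gamma_{b,1}=\emptyset$ for every $b\in\Gamma_+$, so each element contributes $\tilde H^{-1}(\Delta(\emptyset))=\F$ and the total is $|\Gamma_+|=7$. For $(p,q)=(2,2)$, $\Gamma_{b,2}$ is the discrete set $S_b(1)$ of elements covered by $b$, so each $b$ contributes $\max(|S_b(1)|-1,0)$ via $\tilde H^0$. For $(p,q)=(2,3)$, I read off the bipartite graph $\Gamma_{b,3}$ on the rank $rk(b)-1$ and rank $rk(b)-2$ elements below $b$ and count connected components minus one; this pinpoints the failures of the uniform condition responsible for the cubic relations.

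For the higher-degree cases $(p,q)=(3,3)$ and $(3,4)$, I need $\tilde H^1$. When $q=3$ the order complex is the one-dimensional graph $\Gamma_{b,3}$, so $\dim\tilde H^1$ is the cycle rank computed from edges $-$ vertices $+$ components. When $q=4$ the complex is genuinely two-dimensional: 2-simplices come from length-two chains $a_2>a_1>a_0$ below $b$ with $d(b,a_0)\le 3$, and these can fill in potential 1-cycles. I expect this last case to be the main obstacle, since a single missed or extra triangle changes $\dim\tilde H^1$; I would therefore enumerate the 2-faces of $\Delta(\Gamma_{b,4})$ exhaustively for each $b$ of rank at least $4$ and reduce a boundary matrix to pin down the contribution.

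As a sanity check, the computed bigraded Poincar\'e series $P(s,t)=\sum \dim\Ext^{p,q}_{A'}(\F,\F)\,s^p t^q$ must satisfy $P(-1,t)\cdot H_{A'}(t)=1$, where $H_{A'}(t)$ is the Hilbert series obtained by counting good monomials via Theorem \ref{basis}. A match confirms that no pair $(b,q)$ has been overlooked and that the dimensions listed in the statement are complete.
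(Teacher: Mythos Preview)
Your proposal is correct and follows essentially the same route as the paper: apply Theorem~\ref{intro1} and read off each contribution by inspecting the order complexes $\Delta(\Gamma_{b,q})$ one at a time. The only minor difference is in the $(p,q)=(3,4)$ case, where the paper observes directly that $\Delta(\Gamma_{B,4})=\Delta((*,B))$ is homotopy equivalent to $S^1$, whereas you plan a boundary-matrix computation; either works, and your Hilbert-series cross-check is a harmless extra.
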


\begin{proof}
This follows immediately from \ref{intro1} by analyzing the spaces $\Gamma_{v,q}$ for various $v\in \Gamma_+$ and $q\le rk(v)$.  
We observe that $\Gamma_{B,2}$, $\Gamma_{X1,2}$ and $\Gamma_{X2,2}$ are contributing the three dimensions to 
$\Ext^{2,2}_{A'}(\F,\F)$.  Similarly $\Gamma_{X1,3}$ and $\Gamma_{X2,3}$ are each contributing one dimension to 
$\Ext^{2,3}_{A'}(\F,\F)$ and 0 to $\Ext^{3,3}_{A'}(\F,\F)$, whereas $\Gamma_{B,3}$ contributes 0 to $\Ext^{2,3}_{A'}(\F,\F)$ and dimension $1$ to $\Ext^{3,3}_{A'}(\F,\F)$ (since $\Delta(\Gamma_{B,3})$ is $S^1$.).  Finally, $\Gamma_{B,4}$ is contributing 
only to $\Ext^{3,4}_{A'}(\F,\F)$ since $\Delta(\Gamma_{B,4}) = \Delta(\,(*,B)\,)$ is a two-dimensional space homotopic to $S^1$.    
\end{proof}

As a consequence of \ref{cex1}, we see that $A'$, which is generated by seven linear elements, has a minimal set of relations consisting of 3 quadratic relations and 2 cubic relations.  Now we turn to $A$.

\begin{lemma}  The algebra $A$ is generated by seven linear elements and has a minimal set of relation consisting of 3 quadratic relations and at most one cubic relation.
\end{lemma}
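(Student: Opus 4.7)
The plan is to combine two tools: the Hilbert series identity $\dim A_n = \dim A'_n$ for all $n$ (which holds because $A'$ is the associated graded of $A$ with respect to the rank filtration), together with the spectral-sequence upper bound $\dim \Ext_A^{p,q}(\F,\F) \le \dim \Ext_{A'}^{p,q}(\F,\F)$ obtained from the $E_1$-page $\Ext_{A'}(\F,\F) \Rightarrow gr\,\Ext_A(\F,\F)$. From Lemma \ref{cex1} these give a priori bounds of 7 linear generators, at most 3 quadratic and at most 2 cubic minimal relations; the work is to sharpen the cubic count.

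\textbf{Linear generators and quadratic relations.} The rank filtration is trivial in degree $\le 1$, so $A_1 = A'_1$ has dimension 7 with basis $\{v_x\}_{x\in \Gamma_+}$. For a connected graded algebra generated in degree 1, the number of minimal quadratic relations equals $\dim V^{\otimes 2} - \dim A_2$, which depends only on the Hilbert series. Since $\dim A_2 = \dim A'_2$, the algebra $A$ has the same number of minimal quadratic relations as $A'$, namely 3. I would list these explicitly as the three degree-2 Vieta-type equations $e(\pi,2) = e(\pi',2)$ (taken modulo the linear relations of $A$) coming from the three length-2 intervals with two covering paths, namely the ones below $B$, $X_1$, $X_2$ that contribute to $\Ext^{2,2}_{A'}(\F,\F)$.

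\textbf{Reducing the cubic count to a dimension comparison.} Let $R_2^A, R_2^{A'} \subset V^{\otimes 2}$ be the three-dimensional spaces of minimal quadratic relations of $A$ and $A'$, and let $c_A, c_{A'}$ denote the numbers of minimal cubic relations. Using $\dim A_3 = \dim V^{\otimes 3} - \dim(V R_2 + R_2 V) - c$ in both algebras and the Hilbert series identity $\dim A_3 = \dim A'_3$, one obtains
\[
c_A \;=\; 2 \;+\; \dim(V R_2^{A'} + R_2^{A'} V) \;-\; \dim(V R_2^A + R_2^A V).
\]
So it is enough to show $\dim(V R_2^A + R_2^A V) \;>\; \dim(V R_2^{A'} + R_2^{A'} V)$.

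\textbf{Producing the witness.} Although $R_2^A$ and $R_2^{A'}$ have the same leading rank terms, the relations in $R_2^A$ carry nontrivial lower-rank corrections: writing a basis $r_i = r_i' + s_i$ with $r_i' \in R_2^{A'}$ the leading term and $s_i$ strictly lower in rank, the correction terms $s_i$ are visible in $A$ precisely through the $j=2$ Vieta relations $e(\pi,2) = e(\pi',2)$ for length-3 paths (which reduce to shorter equations in $A'$ only after passing to the associated graded). I would exhibit an explicit edge $e \in V$ and an index $i$ such that the element $e \cdot s_i \in V^{\otimes 3}$ lies outside $V R_2^{A'} + R_2^{A'} V$, using the canonical basis of \ref{basis} to track which monomials of each rank appear. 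The relevant length-3 paths are concentrated at $X_1$ and $X_2$, so the check is finite and small.

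\textbf{Main obstacle.} The delicate step is the last one: while the presence of corrections $s_i \ne 0$ is easy, showing that at least one multiplier $e \cdot s_i$ is genuinely independent of $V R_2^{A'} + R_2^{A'} V$ requires bookkeeping. An equivalent but less concrete route would be to compute the $d_1$ (or higher) differential in the spectral sequence and show it is nonzero on one of the two cubic classes of $\Ext^{2,3}_{A'}$, but either way one is reduced to the same explicit verification at the level of monomials of a specific rank in $V^{\otimes 3}$.
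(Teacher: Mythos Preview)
Your reduction to the identity
\[
c_A \;=\; 2 \;+\; \dim(VR_2^{A'}+R_2^{A'}V)\;-\;\dim(VR_2^{A}+R_2^{A}V)
\]
is correct, and the paper does not argue this way: it simply writes out every quadratic and cubic relation of $A$ by hand and finds an explicit linear identity expressing one of the two cubic generators in terms of the other modulo $VR_2^A+R_2^AV$.  So your route is genuinely different and more conceptual.

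However, your proposed ``witness'' step is not logically sufficient as stated.  Exhibiting $e\cdot s_i\notin VR_2^{A'}+R_2^{A'}V$ does \emph{not} show $\dim(VR_2^A+R_2^AV)>\dim(VR_2^{A'}+R_2^{A'}V)$: the element $e\cdot s_i$ need not lie in $VR_2^A+R_2^AV$ at all (we only know $e\cdot r_i=e\cdot r_i'+e\cdot s_i$ does, and its leading term $e\cdot r_i'$ is already in $VR_2^{A'}$).  What you actually need is an element of $VR_2^A+R_2^AV$ whose \emph{leading term} lies outside $VR_2^{A'}+R_2^{A'}V$, and that forces a cancellation of top terms, i.e.\ a nontrivial syzygy $\sum v_jr_j'+\sum r_k'w_k=0$ among the $A'$-relations whose lift $\sum v_jr_j+\sum r_kw_k$ has a leading term not in $VR_2^{A'}+R_2^{A'}V$.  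Equivalently, you must show that the class in $\Ext^{3,3}_{A'}$ (not $\Ext^{2,3}_{A'}$) is killed by the spectral-sequence differential.

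Once you formulate the witness correctly, finding it is exactly the explicit computation the paper does: the paper's identity
\[
(B-X_1)\otimes Q_3 - Q_2\otimes Y_2 - (B-X_2)\otimes Q_4 - Q_2\otimes Y_1 \;=\; (U_2-V_2)-(U_1-V_1)
\]
is precisely such a lifted syzygy.  So your framework is sound but incomplete, and completing it collapses to the same brute-force verification the paper carries out.
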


\begin{proof}  We proceed by brute force.

After the linear change of variables described just after the definition \ref{original}, $A$ has seven linear generators $v_x$, $x\in \Gamma_+$.   We will drop the $v_x$ notation and simply write $x$.  Let $V$ be the $\F$-space on the seven generators: $B, X1,X2,Y1,Y2,Z1$ and $Z2$.

There are two paths in $\Gamma$ from $B$ to $Y1$ and two paths from $B$ to $Y2$, each giving us one quadratic relation.  There are two paths from $X1$ to $*$ and two from $X2$ to $*$ giving us two more quadratic relations.  There are also multiple paths from $B$ to $*$, but it is clear that these give us relations that are already encoded in the previous four.  Those four relations, written as 2-tensors are
\begin{align*}
Q_1 &=(B-X1)\otimes (X1-Y1) - (B-X2)\otimes(X2-Y1)\\
Q_2&=(B-X2)\otimes (X2-Y2) - (B-X1)\otimes(X1-Y2)\\
Q_3&=(X1-Y1)\otimes (Y1-Z1) + (X1-Y1)\otimes Z1+(Y1-Z1)\otimes Z1\\
	&\qquad-(X1-Y2)\otimes (Y2-Z2)-(X1-Y2)\otimes Z2-(Y2-Z2)\otimes Z2\\
Q4&=(X2-Y1)\otimes (Y1-Z1) + (X2-Y1)\otimes Z1+(Y1-Z1)\otimes Z1\\
	&\qquad-(X2-Y2)\otimes (Y2-Z2)-(X2-Y2)\otimes Z2-(Y2-Z2)\otimes Z2\\
\end{align*}
By direct inspection we see:
$$Q_1+Q_2 = Q_3 - Q_4 = (X1-X2)\otimes (Y1-Y2)$$
and thus the space of quadratic relations has dimension at most 3.  By inspection of monomials, it is clear that the span of 
the four quadratic relations is exactly 3.

Each of the four paths from $B$ to $*$ gives a cubic element $e(\pi,3)$.  These elements are
\begin{align*}
C_1 &=(B-X1)\otimes(X_1-Y_1)\otimes(Y1-Z1) +(B-X1)\otimes(X1-Y1)\otimes Z1\\
		&\qquad\qquad +(B-X1)\otimes(Y1-Z1)\otimes Z1 + (X1-Y1)\otimes(Y1-Z1)\otimes Z1\\
C_2 &=(B-X1)\otimes(X_1-Y_2)\otimes(Y2-Z2) +(B-X1)\otimes(X1-Y2)\otimes Z2\\
		&\qquad\qquad +(B-X1)\otimes(Y2-Z2)\otimes Z2 + (X1-Y2)\otimes(Y2-Z2)\otimes Z2\\
C_3 &=(B-X2)\otimes(X2-Y_2)\otimes(Y2-Z2) +(B-X2)\otimes(X2-Y2)\otimes Z2\\
		&\qquad\qquad +(B-X2)\otimes(Y2-Z2)\otimes Z2 + (X2-Y2)\otimes(Y2-Z2)\otimes Z2\\
C_4 &=(B-X2)\otimes(X2-Y_1)\otimes(Y1-Z1) +(B-X2)\otimes(X2-Y1)\otimes Z1\\
		&\qquad\qquad +(B-X2)\otimes(Y1-Z1)\otimes Z1 + (X2-Y1)\otimes(Y1-Z1)\otimes Z1\\						
\end{align*}
The two paths between $X1$ and $*$ also generate cubic elements $e(\pi,3)$:
\begin{align*}
U_1 &=(X1-Y1)\otimes(Y_1-Z_1)\otimes Z1\\
V_1 &=(X1-Y2)\otimes(Y_2-Z_2)\otimes Z2\\				
\end{align*}
And finally the two paths between $X2$ and $*$ generate two more cubic elements $e(\pi,3)$:
\begin{align*}
U_2 &=(X2-Y1)\otimes(Y_1-Z_1)\otimes Z1\\
V_2 &=(X2-Y2)\otimes(Y_2-Z_2)\otimes Z2\\				
\end{align*}

The cubic relations given by the equations of the form $e(\pi,3) = e(\pi',3)$, from \ref{original}, are then the following:
$C1-C2$, $C2-C3$, $C3-C4$, $U1-V1$ and $U2-V2$.  We make the following direct observations:
\begin{align*}
C_1-C_2 &= (B-X1)\otimes Q_3 + (U_1-V_1)\\
C_2-C_3 &= -Q_2\otimes Y2\\
C_3-C_4 &= (B-X2)\otimes (-Q_4) + (V_2-U_2)\\
C_1-C_4 &= Q_2\otimes Y1\\
\end{align*}
The first three of these equations tell us that $(U_1-V_1)$ and $(U_2-V_2)$ generate the other three cubic relations (modulo quadratic relations).  But adding the first three and equating to the the fourth yields the new equation:
$$ (B-X1)\otimes Q_3 - Q_2 \otimes Y2 - (B-X2)\otimes Q_4 - Q_2\otimes Y_1 + (U_1-V_1) = (U_2-V_2)$$
This proves that either $(U_2-V_2)$ or $(U_1-V_1)$ alone, together with the quadratic relations, generates all of the other cubic relations. This proves the lemma.  
\end{proof}

\begin{rmk}  We know that the algebras $A$ and $A'$ have the same Hilbert series, which can be computed directly from the information in Lemma \ref{cex1}.  We have shown, however that $\dim\Ext_A^{2,3}(\F,\F)$ is at most 1.  By a simple Hilbert series argument, we can then calculate the following dimensions:
\begin{align*} 
\dim(\Ext^{1,1}_{A}(\F,\F))&=7\\
\dim(\Ext^{2,2}_{A}(\F,\F))&=3\\
\dim(\Ext^{2,3}_{A}(\F,\F))&=1\\
\dim(\Ext^{3,3}_{A}(\F,\F))&=0\\
\dim(\Ext^{3,4}_{A}(\F,\F))&=1\\
\end{align*}
\end{rmk}

Lemma \ref{cex1} and the remark above combine to prove Theorem \ref{intro3}.

 \bibliographystyle{amsplain}
\bibliography{bibliog}
\end{document}